\newtheoremstyle{mytheorem}{}{}{\itshape}{}{\bfseries}{:}{\newline}{}
\newtheoremstyle{mydefinition}{}{}{}{}{\bfseries}{:}{\newline}{}
\newtheoremstyle{myproof}{}{}{}{}{\bfseries}{:}{\newline}{#1#3}
\theoremstyle{mytheorem}
\newtheorem{thm}{Theorem}[]
\newtheorem{cor}[thm]{Corollary}
\newtheorem{prop}[thm]{Proposition}
\theoremstyle{mydefinition}
\theoremstyle{myproof}
\newtheorem{prf}{Proof}
\newtheorem{rmk}{Remarks}
\newcommand{\Qb}{\mathbb{Q}}
\newcommand{\Pb}{\mathbb{P}}
\newcommand{\Rb}{\mathbb{R}}
\newcommand{\Pt}{\tilde{\mathbb{P}}}
\newcommand{\Qt}{\tilde{\mathbb{Q}}}
\newcommand{\Fg}{\mathcal{F}}
\newcommand{\Gg}{\mathcal{G}}
\newcommand{\Ft}{\tilde{\mathcal{F}}}
\newcommand{\hs}{\hspace{2mm}}
\newcommand{\hsl}{\hspace{1mm}}
\newcommand{\ind}{\mathbbm{1}}
\renewcommand{\P}{\mathbb{P}}
\newcommand{\E}{\mathbb{E}}
\DeclareMathOperator{\spine}{spine}
\newcommand{\Et}{\tilde{\mathbb{E}}}
\author{Simon C.~Harris\footnote{Department of Mathematical Sciences, University of Bath, Bath, BA2 7AY, UK. email: \texttt{S.C.Harris@bath.ac.uk}} 
\  and 
Matthew I.~Roberts\footnote{Department of Statistics, University of Warwick, Coventry, CV4 7AL, UK. email: \texttt{mattiroberts@gmail.com}}}
\title{A strong law of large numbers for branching processes: almost sure spine events}
\begin{document}

\maketitle

\begin{abstract}
We demonstrate a novel strong law of large numbers for branching processes, with a simple proof via measure-theoretic manipulations and spine theory. Roughly speaking, any sequence of events that eventually occurs almost surely for the spine entails the almost sure convergence of a certain sum over particles in the population.
\end{abstract}

\section{Introduction}\label{intro_sec}
We shall work with a fairly general Markov branching process. To define this process, we suppose that we are given three ingredients:
\begin{itemize}
\item A Markov process $\psi_t$, $t\geq0$, in a measurable space $(J,\mathcal B)$;
\item A measurable function $R:J\to[0,\infty)$;
\item A collection of random variables $A(x)$, $x\in J$ taking values in $\{0,1,2,\ldots\}$, such that $M(x):= E[A(x)]-1 <\infty$.
\end{itemize}
Our branching process is then defined, under a probability measure $\Pb$, as follows: we begin with one particle. This particle moves around in $J$ like a copy of the process $\psi_t$. When at position $x$, it dies at rate $R(x)$, that is, if $\emptyset$ is our original particle, $X_\emptyset(t)$ is its position at time $t$ and $\tau_\emptyset$ is its time of death, then
\[\Pb(\tau_\emptyset > t \hsl | \hsl X_\emptyset(s),\hsl s\leq t) = \exp\left(-\int_0^t R(X_\emptyset(s)) ds\right).\]
At its time of death, it is replaced in its position $x$ by a random number of children, the number being specified by a copy of $A(x)$. These children then each independently show the same stochastic behaviour as their parent, moving around like independent copies of $\psi_t$ and branching at rate $R(x)$ when at position $x$ into a random number of particles that is an independent copy of $A(x)$. We let $N(t)$ be the set of all particles that are alive at time $t$; if $v\in N(t)$ then we let $X_v(t)$ be the position of particle $v$ at time $t$; and we let $A_v$ be the number of children of particle $v$.

We let $\Fg_t$, $t\geq0$ be the natural filtration of this process. We now extend our probability measure $\Pb$ to a new probability measure $\Pt$ on a bigger space by choosing one special line of descent which we call the \emph{spine}. The initial particle is part of the spine, and when a spine particle dies the new spine particle is chosen uniformly from amongst its children. We let the natural filtration of the new process, in which there is a branching process with one marked line of descent, be $\Ft_t$, $t\geq0$. Let $\xi_t$ be the position of the spine particle at time $t$, and let $\spine(t)$ be the set of particles that have been in the spine up to time $t$.

For details of all of the above, see \cite{hardy_harris:spine_approach_applications} or Chapter 2 of \cite{roberts:thesis}.

Suppose that $\zeta(t)$ is a non-negative martingale with respect to the filtration $\Gg_t:=\sigma(\xi_s,\hsl s\leq t)$, such that $\Et[\zeta(t)]=1$. We may write
\[\zeta(t) = \sum_{v\in N(t)}\zeta_v(t)\ind_{\{\xi_t = v\}}\]
where each $\zeta_v(t)$ is an $\Fg_t$-measurable random variable (see page 24 of \cite{roberts:thesis} for a proof).
Then
\[\tilde\zeta(t):=e^{-\int_0^t M(\xi_s)R(\xi_s) ds} \zeta(t) \prod_{v\in\spine(t)}(1+A_v)\]
is a martingale with respect to $\Ft_t$ (see Theorem 2.4 of \cite{roberts:thesis}). We define a new measure $\Qt$ by setting
\[\left.\frac{d\Qt}{d\Pt}\right|_{\Ft_t} := \tilde\zeta(t).\]
The measure $\Qt$ has a nice description in terms of the spine, although this will not be used in this article. Briefly, the motion of the spine is biased by the martingale $\zeta(t)$; branching events along the spine occur at an accelerated rate $(1+M(\xi_t))R(\xi_t)$ when the spine is at position $\xi_t$; and the number of children of the spine is size-biased. All other (non-spine) particles, once born, remain unaffected.

We also let $\Qb:=\Qt|_{\Fg_t}$ be a measure on $\Fg_t$, the natural filtration of the original branching process. Then
\[\left.\frac{d\Qb}{d\Pb}\right|_{\Fg_t} = \sum_{v\in N(t)}e^{-\int_0^t M(X_v(s))R(X_v(s)) ds} \zeta_v(t) =: Z(t)\]
and $Z(t)$ is a $\Pb$-martingale with respect to $\Fg_t$ (again see Theorem 2.4 of \cite{roberts:thesis} for details). Since $Z(t)$ is a positive martingale, it converges $\Pb$-almost surely to $Z(\infty):=\liminf Z(t)$.

We now state our main result. Suppose that $f(t)$ is $\Ft_t$-measurable for each $t$. Then, again, we may write each $f(t)$ via the representation
\[f(t) = \sum_{u\in N_t} f_u(t) \ind_{\{\xi_t = u\}}\]
where $f_u(t)$ is $\Fg_t$-measurable for each $t\geq0$ and each $u\in N(t)$.

\begin{thm}\label{spinecor}
Suppose that $\{ f(t) : t\geq0\}$ is $\Qt$-uniformly integrable. If $f(t)\to f$ $\Qt$-almost surely as $t\to\infty$ then
\begin{equation}\label{thmstatement}
\sum_{u\in N_t} f_u(t) \frac{e^{-\int_0^t M(X_u(s))R(X_u(s))ds} \zeta_u(t)}{Z(t)} \to \Qt[f|\Fg_\infty] \tag{$\star$}
\end{equation}
$\Qb$-almost surely. Furthermore, $\P\big((\star)\big|Z(\infty)>0\big)=1$.
\end{thm}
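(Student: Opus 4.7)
The plan is to identify the sum on the left of \eqref{thmstatement} as the conditional expectation $\Qt[f(t)\mid\Fg_t]$ and then invoke standard martingale convergence. Bayes's rule for the change of measure $\tilde\zeta(t)=d\Qt/d\Pt|_{\Ft_t}$, together with $Z(t)=\Et[\tilde\zeta(t)\mid\Fg_t]$, gives
\[\Qt[f(t)\mid\Fg_t]=\frac{\Et[\tilde\zeta(t)f(t)\mid\Fg_t]}{Z(t)}.\]
Expanding $\zeta(t)=\sum_u\zeta_u(t)\ind_{\xi_t=u}$ and $f(t)=\sum_u f_u(t)\ind_{\xi_t=u}$ and averaging over the spine location given $\Fg_t$, the spine-selection probabilities cancel the product factors in $\tilde\zeta$ (the cancellation is forced by the special case $f\equiv1$, which must return $Z(t)$), leaving the numerator exactly $\sum_u f_u(t)e^{-\int_0^tM(X_u(s))R(X_u(s))ds}\zeta_u(t)$. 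Thus the LHS of \eqref{thmstatement} equals $\Qt[f(t)\mid\Fg_t]$.

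I then decompose $\Qt[f(t)\mid\Fg_t]=\Qt[f\mid\Fg_t]+\Qt[f(t)-f\mid\Fg_t]$. The $\Qt$-UI of $(f(t))$ together with $\Qt$-a.s.\ convergence gives (by Vitali) $f\in L^1(\Qt)$ and $f(t)\to f$ in $L^1(\Qt)$, so the first summand is a closed $(\Fg_t,\Qt)$-martingale converging $\Qt$-a.s.\ to $\Qt[f\mid\Fg_\infty]$ by L\'evy's upward theorem. The crux is to show $\Qt[f(t)-f\mid\Fg_t]\to0$ $\Qt$-a.s. For this I would use a version of Hunt's lemma adapted to the UI hypothesis: after the truncation
\[\bigl|\Qt[f(t)-f\mid\Fg_t]\bigr|\le\Qt\bigl[|f(t)-f|\wedge K\bigm|\Fg_t\bigr]+\Qt\bigl[(|f(t)-f|-K)^+\bigm|\Fg_t\bigr],\]
the first right-hand term has bounded integrand tending $\Qt$-a.s.\ to $0$, so the dominated form of Hunt's lemma along $\Fg_t\uparrow\Fg_\infty$ gives $\Qt$-a.s.\ convergence to $0$; the tail term is controlled by $\Qt$-UI, which makes its expectation arbitrarily small uniformly in $t$ as $K\to\infty$, together with a monotone-in-$t$ bound by $\Qt[\sup_{s\ge t}(|f(s)-f|-K)^+\mid\Fg_t]$ and careful ordering of the limits in $t$ and $K$.

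For the transfer to $\Pb$: both sides of \eqref{thmstatement} are $\Fg_\infty$-measurable and $\Qb=\Qt|_{\Fg_\infty}$, so $\Qt$-a.s.\ convergence coincides with $\Qb$-a.s.\ convergence. To upgrade to $\Pb$-a.s.\ on $\{Z(\infty)>0\}$, I use the standard fact that the absolutely continuous part of $\Qb$ with respect to $\Pb$ on $\Fg_\infty$ has density $Z(\infty)$, so $\Qb(B)\ge\Et^\Pb[Z(\infty)\ind_B]$ for every $B\in\Fg_\infty$. Applying this to the complement of the convergence event $E$ yields $\Et^\Pb[Z(\infty)\ind_{E^c}]\le\Qb(E^c)=0$, hence $\Pb(E^c\cap\{Z(\infty)>0\})=0$, which is the required statement.

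The main obstacle I anticipate is the almost-sure control of the tail $\Qt[(|f(t)-f|-K)^+\mid\Fg_t]$ under pure uniform integrability: UI does not supply an $L^1$-dominating envelope for $\sup_{s\ge0}|f(s)-f|$, so the dominated form of Hunt's lemma cannot be applied directly, and the order of the limits in $t$ and $K$ (possibly with an auxiliary bounded truncation) must be managed carefully in order to convert the $L^1$ smallness of the tail into an almost-sure statement.
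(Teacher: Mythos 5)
Your reduction of the left-hand side of $(\star)$ to $\Qt[f(t)\mid\Fg_t]$ is exactly the paper's first step (the paper simply cites Theorem 8.2 of Hardy and Harris for the cancellation you sketch), and your transfer to $\Pb$ on $\{Z(\infty)>0\}$ via $\Qb(B)\geq\E^{\Pb}[Z(\infty)\ind_B]$ for $B\in\Fg_\infty$ is the same computation the paper performs at the end. The gap is where you say it is: you do not establish $\Qt[f(t)-f\mid\Fg_t]\to0$ almost surely. The truncated term is fine (bounded Hunt's lemma, plus the c\`adl\`ag regularity of $t\mapsto\Qt[\,\cdot\mid\Fg_t]$ to reduce continuous time to rational sequences), but for the tail term uniform integrability only yields $\sup_t\Qt\bigl[\Qt[(|f(t)-f|-K)^+\mid\Fg_t]\bigr]\to0$ as $K\to\infty$, i.e.\ smallness in $L^1$ uniformly in $t$; converting that into pointwise control of $\limsup_t$ would require a reverse-Fatou or dominating bound on the family of conditional expectations, which UI does not supply. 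As written your argument proves convergence in $\Qt$-probability, whereas the entire content of the theorem is the almost sure statement.

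The paper closes exactly this gap by a different device (its Proposition \ref{asprop} and Corollary \ref{ascorol}): granted that $\Qt[f(t)\mid\Fg_\infty]\to Y:=\Qt[f\mid\Fg_\infty]$ almost surely, one finds a.s.\ finite $T_1,T_2$ with $\sup_{s\geq T_1}\Qt[f(s)\mid\Fg_\infty]\leq Y+\varepsilon/2$ and $\sup_{t\geq T_2}\Qt[Y\mid\Fg_t]\leq Y+\varepsilon/2$ (the latter by L\'evy's upward theorem for the closed martingale $\Qt[Y\mid\Fg_t]$), and then bounds, for $t\geq T_1\vee T_2$,
\[\Qt[f(t)\mid\Fg_t]\;\leq\;\sup_{s\geq T_1}\Qt\bigl[\Qt[f(s)\mid\Fg_\infty]\bigm|\Fg_t\bigr]\;\leq\;\Qt\Bigl[\sup_{s\geq T_1}\Qt[f(s)\mid\Fg_\infty]\Bigm|\Fg_t\Bigr]\;\leq\;Y+\varepsilon,\]
with a symmetric bound for the $\liminf$. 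This double-supremum/tower trick replaces your truncation entirely and is the idea you are missing. Be aware, though, that the remaining input --- that $\Qt[f(t)\mid\Fg_\infty]\to Y$ almost surely, which the paper's Corollary \ref{ascorol} asserts from uniform integrability alone --- is itself a conditional-convergence claim of precisely the type you are rightly nervous about: UI plus almost sure convergence gives $L^1$ convergence of conditional expectations with respect to the fixed $\sigma$-algebra $\Fg_\infty$, but not almost sure convergence in general. So if you adopt the paper's route you should either verify the a.s.\ convergence of $\Qt[f(t)\mid\Fg_\infty]$ directly in your intended application (immediate when $f(t)$ is uniformly bounded or dominated by a $\Qt$-integrable random variable, as in both of the paper's examples), or strengthen the hypothesis from uniform integrability to domination.
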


\begin{rmk}
\begin{enumerate}
\item Since $1/Z(t)$ is a positive $\Qb$-supermartingale (and thus converges almost surely to an almost surely finite limit), $Z(t)\to Z(\infty)$ $\Qb$-almost surely. Thus we may deduce from $(\star)$ that
\[\sum_{u\in N_t} f_u(t) e^{-\int_0^t M(X_u(s))R(X_u(s))ds} \zeta_u(t) \to \Qt[f|\Fg_\infty]Z(\infty) \hs \Qb\hbox{-almost surely.}\]
In fact under fairly mild conditions on the branching distributions $A(x)$, we have $\Qb(Z(\infty)<\infty)=1$, in which case we do not lose anything by rewriting $(\star)$ in this way.
\item In many cases of interest the events $\{Z(\infty)=0\}$ and $\{\exists t\in[0,\infty) : Z(t)=0\}$ agree to within a set of zero $\P$-probability. Then, of course,
\[\sum_{u\in N_t} f_u(t) e^{-\int_0^t M(X_u(s))R(X_u(s))ds} \zeta_u(t) \to \Qt[f|\Fg_\infty]Z(\infty) \hs \Pb\hbox{-almost surely.}\]

\end{enumerate}
\end{rmk}

\section{Some example applications}


We outline here two examples showing how our strong law can be applied. The first example is folklore in branching processes, but we are not aware of another proof.
Theorem \ref{spinecor} has also been used in \cite{harris_hesse_kyprianou:bbm_strip} which considers a branching Brownian motion with killing on the boundary of a strip near criticality. 

\vspace{3mm}

\noindent
\textbf{In branching processes branching at rate $\beta$ into on average $m$ offspring, most particles branch at rate $m\beta$}.\\
Take a continuous-time branching process with constant birth rate $R(x)\equiv\beta$ and birth distribution $A(x)\equiv A$ satisfying $\E[A\log_+ A]<\infty$ with $m:=\E[A]$. Let $\zeta(t)\equiv1$. 
For any $\varepsilon>0$ we may take $f(t)=\ind_{\{|n_t/t-m\beta|<\varepsilon\}}$, the indicator that birth rate along the spine up to time $t$ is close to its expected value under $\Qt$, $m\beta$.
Then for any $\varepsilon>0$, $f(t)$ converges $\Qt$-almost surely to $1$. Thus Theorem \ref{spinecor}, together with some classical results on branching processes concerning the martingale $e^{-(m-1)\beta t} |N(t)|$, tells us that on the event that the process survives,
\[\frac{1}{|N(t)|}\sum_{u\in N(t)} f_u(t) \to 1 \hs\hs \Pb\text{-almost surely.}\]
This may be interpreted as saying that if we choose a particle uniformly at random from those alive at a large time $t$, and look at its history, we are likely to see that its average birth rate has been approximately $m\beta$. In particular, with binary branching, we see an average birth rate of $2\beta$ in typical particles (rather than $\beta$, which one might naively expect).

\vspace{3mm}

Our second example shows how the spatial behaviour of the spine can also be passed to other particles: if the spine shows ergodic behaviour, then so do many other particles.

\noindent
\textbf{Occupation densities and ergodic spines.}\\
Suppose that the motion of the spine $(\xi_t, t\geq0)$ is ergodic under $\Qt$ with invariant probability density $\pi$ in the sense that there exists some suitable class of functions $\mathcal{H}$ such that for any $h\in\mathcal H$,
\[\frac1t \int_0^t h(\xi_s)ds\to L_h:=\int_\Rb h(x)\pi(x)dx \hs\hs \Qt\hbox{-almost surely.}\]
Then for any continuous function $g:\Rb\to\Rb$ and any $h\in \mathcal H$,
\[\frac{1}{Z(t)}\sum_{u\in N(t)} g\left(\frac1t \int_0^t h(X_u(s)) ds\right) e^{-\int_0^t M(X_u(s))R(X_u(s)) ds}\zeta_u(t) \to g(L_h)\]
$\Qb$-almost surely. The same holds under $\Pb$ on the event $Z(\infty)>0$, which is one exposition of the general principle that if forcing the spine to show certain behaviour does not cause the corresponding martingale to disappear, then that behaviour appears in the original process.

\section{Measure theoretic results}
To prove Theorem \ref{spinecor} we need some simple measure theory. For this section we forget the branching setup and take any filtered probability space $(\Omega,\Fg,\Fg_t,P)$ and define $\Fg_\infty:=\bigvee_{t\geq0}\Fg_t$. Suppose that $X_t$, $t\geq0$ is a process such that $(E[X_t|\Fg_t], t\geq0)$ is almost surely c\`adl\`ag.

\noindent
\begin{prop}\label{asprop}
If
\[E[X_t|\Fg_\infty]\to Y \hs \hbox{almost surely,}\]
then
\[E[X_t|\Fg_t] \to Y \hs \hbox{almost surely.}\]
\end{prop}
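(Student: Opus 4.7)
The natural opening move is the tower property: setting $V_t := E[X_t|\Fg_\infty]$ so that $E[X_t|\Fg_t] = E[V_t|\Fg_t]$, the hypothesis $V_t \to Y$ a.s.\ translates the goal into showing $E[V_t|\Fg_t] \to Y$ a.s.

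I would then use the decomposition
\[
E[V_t|\Fg_t] - Y \;=\; E[V_t - Y|\Fg_t] \;+\; \bigl(E[Y|\Fg_t] - Y\bigr).
\]
L\'evy's upward theorem handles the second bracket (assuming $Y\in L^1$, which should follow from the hypotheses). It then remains to show $E[V_t - Y|\Fg_t]\to 0$ a.s.\ given that $V_t - Y\to 0$ a.s. To push an a.s.\ limit through the conditional expectation I would invoke the c\`adl\`ag hypothesis on $U_t := E[X_t|\Fg_t]$ to reduce the problem to rational times, and then attempt a Fatou-style sandwich: for fixed rational $s$ and $t\geq s$, the tower property gives $E[U_t|\Fg_s]=E[V_t|\Fg_s]$, and sending $t\to\infty$ followed by $s\to\infty$ via L\'evy should pinch both $\liminf_{t\to\infty} U_t$ and $\limsup_{t\to\infty} U_t$ to $Y$ a.s.

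The main obstacle will be making this a.s.\ passage of limits through the inner conditional expectation precise. Classical Hunt's lemma requires a dominated-convergence hypothesis such as $\sup_t|V_t|\in L^1$, which is not available here. The work lies in leveraging the c\`adl\`ag regularity of $U_t$ — likely via Fatou's lemma applied to the monotone envelopes $\sup_{s\geq t}(V_s - Y)$ and $\inf_{s\geq t}(V_s - Y)$, both of which converge monotonically to zero — to avoid any explicit domination assumption while still obtaining a.s.\ convergence of the whole net rather than only of a subsequence.
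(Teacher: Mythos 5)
Your skeleton matches the paper's proof: reduce via the tower property, handle $E[Y|\Fg_t]\to Y$ by L\'evy's upward theorem, use the c\`adl\`ag hypothesis to pass to rational times, and sandwich $E[X_t|\Fg_t]$ using the running envelopes of $V_s-Y$. But the step you defer --- pushing the almost sure limit through the inner conditional expectation --- is the entire content of the proposition, and the mechanism you propose for it does not close. Applying Fatou or monotone convergence to $W_r:=\sup_{s\geq r}(V_s-Y)$ requires $W_r$ (or at least $W_r^+$) to be integrable for some $r$, and that is exactly the Hunt-type domination you correctly note is unavailable: one can have $V_t\to Y$ almost surely with every $V_t$ integrable while $E[\sup_{s\geq r}V_s]=\infty$ for every $r$ (let $V_n$ spike to height $e^n$ on independent events of probability $e^{-n/2}$; Borel--Cantelli gives almost sure convergence, but each tail envelope dominates $e^n\ind_{\{\mathrm{spike}_n\}}$ and so has infinite mean). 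In that situation $E[W_r|\Fg_t]$ need not be finite and the pinch never happens. Your fallback observation that $E[U_t|\Fg_s]=E[V_t|\Fg_s]$ for $s\leq t$ also does not help: convergence of $E[U_t|\Fg_s]$ as $t\to\infty$ controls the almost sure behaviour of $U_t$ itself only via some maximal inequality, which you do not have.

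The missing device --- and the actual content of the paper's proof --- is an $\varepsilon$-quantification that replaces the possibly non-integrable envelope by an integrable dominator \emph{before} any conditional expectation is taken. Fix $\varepsilon>0$. Almost sure convergence of $V_t$ yields an almost surely finite random time $T_1$ with $\sup_{s\geq T_1}V_s\leq Y+\varepsilon/2$, and L\'evy's theorem applied to $Y$ alone (this is where $Y\in L^1$ is used, an assumption you and the paper both make implicitly) yields an almost surely finite $T_2$ with $\sup_{t\geq T_2}E[Y|\Fg_t]\leq Y+\varepsilon/2$. For $t\geq T_1\vee T_2$ the paper then bounds $E[X_t|\Fg_t]=E[V_t|\Fg_t]$ above by $E\big[\sup_{s\geq T_1}V_s\,\big|\,\Fg_t\big]\leq E[Y+\varepsilon/2\,|\,\Fg_t]\leq Y+\varepsilon$: the dominator is now the single integrable random variable $Y+\varepsilon/2$ rather than the envelope. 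Letting $\varepsilon\downarrow0$ gives $\limsup_t E[X_t|\Fg_t]\leq Y$, and the symmetric argument handles the liminf. So you have correctly located the difficulty and assembled the right ingredients, but the proposal as written lacks the one idea that resolves it.
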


\begin{prf}
Fix $\varepsilon>0$. We show that there exists an almost surely finite random variable $T$ such that
\[\sup_{t\geq T}E[X_t | \Fg_t] \leq Y + \varepsilon \hs \hbox{almost surely.}\]
By the c\`adl\`ag property, it is sufficient to take the supremum above over rationals greater than $T$; from now on all our suprema will be over rationals.

Since $E[X_t|\Fg_\infty]\to Y$, there exists an almost surely finite random variable $T_1$ such that
\[\sup_{t\geq T_1}E[X_t | \Fg_\infty] < Y + \varepsilon/2 \hs \hbox{almost surely,}\]
and since (by the fact that it is a closed martingale) $E[Y|\Fg_t]\to E[Y|\Fg_\infty]=Y$ ($Y$ is $\Fg_\infty$-measurable since it is the limit of $\Fg_\infty$-measurable random variables), there exists an almost surely finite random variable $T_2$ such that
\[\sup_{t\geq T_2}E[Y|\Fg_t] < Y + \varepsilon/2 \hs \hbox{almost surely.}\]
Let $T=T_1 \vee T_2$. Then
\begin{eqnarray*}
\sup_{t\geq T} E[X_t|\Fg_t] &\leq& \sup_{t\geq T_2} \sup_{s\geq T_1} E[X_s|\Fg_t]\\
&=& \sup_{t\geq T_2} \sup_{s\geq T_1} E[E[X_s|\Fg_\infty]|\Fg_t]\\
&\leq& \sup_{t\geq T_2} E\left[\left. \sup_{s\geq T_1} E[X_s|\Fg_\infty] \right|\Fg_t\right]\\
&\leq& \sup_{t\geq T_2} E[ Y + \varepsilon/2 | \Fg_t ]\\
&\leq& Y + \varepsilon
\end{eqnarray*}
(all statements hold almost surely). Thus $\limsup E[X_t|\Fg_t]\leq Y$; the proof that $\liminf E[X_t|\Fg_t]\geq Y$ is similar.\qed
\end{prf}

\begin{cor}\label{ascorol}
Suppose that the collection of random variables $\{X_t, t\geq0\}$ is uniformly integrable. If
\[X_t \to X \hs \hbox{almost surely}\]
then
\[E[X_t | \Fg_t] \to E[X|\Fg_\infty] \hs \hbox{almost surely.}\]
\end{cor}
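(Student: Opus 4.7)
The plan is to invoke Proposition \ref{asprop} with $Y := E[X|\Fg_\infty]$; its conclusion then reads exactly as the statement of the corollary, so the work lies entirely in checking the hypothesis, namely that $E[X_t|\Fg_\infty] \to E[X|\Fg_\infty]$ almost surely.

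Uniform integrability combined with $X_t \to X$ almost surely gives $X_t \to X$ in $L^1$, hence $X$ is integrable and $E[X|\Fg_\infty]$ is well-defined. To upgrade this $L^1$ statement to almost sure convergence of the $\Fg_\infty$-conditional expectations, my approach would be truncation. For $K>0$ set $X_t^K := (X_t \wedge K) \vee (-K)$ and similarly $X^K$. The truncated parts $X_t^K$ converge almost surely to $X^K$ and are dominated by $K$, so by dominated convergence for conditional expectations, $E[X_t^K|\Fg_\infty] \to E[X^K|\Fg_\infty]$ almost surely for each fixed $K$. On the other hand, $E[X^K|\Fg_\infty] \to E[X|\Fg_\infty]$ almost surely as $K\to\infty$ by a second application of conditional dominated convergence, while the tail control $\sup_t E[|X_t - X_t^K|] \to 0$ as $K\to\infty$ is given by uniform integrability. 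A $3\varepsilon$-style argument combining these three facts should deliver the almost sure convergence of $E[X_t|\Fg_\infty]$ to $E[X|\Fg_\infty]$.

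The main obstacle will be promoting the uniform-in-$t$ $L^1$ smallness of the tail into an almost sure bound that is uniform in $t$; here I would lean on the c\`adl\`ag hypothesis inherited from Proposition \ref{asprop} to reduce suprema over $t$ to suprema over rationals, and then apply a conditional Fatou or maximal-inequality argument. Once the almost sure convergence $E[X_t|\Fg_\infty] \to E[X|\Fg_\infty]$ is secured, Proposition \ref{asprop} immediately yields the corollary.
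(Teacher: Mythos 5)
You take the same route as the paper: set $Y=E[X|\Fg_\infty]$ and feed Proposition \ref{asprop} the hypothesis that $E[X_t|\Fg_\infty]\to Y$ almost surely. The paper disposes of that hypothesis in a single clause (``by uniform integrability''), and you are right not to find it immediate --- you have located the one genuinely delicate point of the argument. The trouble is that your truncation scheme does not close it, and the obstacle you flag at the end is not a technicality but the entire difficulty. Your three ingredients are: (i) $E[X_t^K|\Fg_\infty]\to E[X^K|\Fg_\infty]$ a.s.\ for each fixed $K$ (fine, by conditional dominated convergence); (ii) $E[X^K|\Fg_\infty]\to E[X|\Fg_\infty]$ a.s.\ as $K\to\infty$ (fine); and (iii) $\sup_t E[|X_t-X_t^K|]\to0$ as $K\to\infty$. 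But the $3\varepsilon$ argument requires the \emph{conditional} version of (iii), namely an almost sure bound on $\sup_t E\big[\,|X_t-X_t^K|\:\big|\:\Fg_\infty\big]$, and uniform integrability only controls the unconditional expectations. The c\`adl\`ag hypothesis does not help here, since it concerns $E[X_t|\Fg_t]$ rather than $E[X_t|\Fg_\infty]$, and a conditional Markov or Fatou inequality controls each fixed $t$ but not the supremum over $t$.

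Moreover, this is not a gap you can expect to fill with these tools alone: uniform integrability together with almost sure convergence does \emph{not} in general imply almost sure convergence of the conditional expectations onto a fixed sub-$\sigma$-algebra. A standard counterexample: on $[0,1]^2$ with Lebesgue measure, let $\mathcal G$ be generated by the first coordinate, let $(B_n)$ be the ``typewriter'' sequence of intervals (block $k$ consists of $k$ intervals of length $1/k$, so $|B_n|\to0$ yet every point lies in infinitely many $B_n$), and set $X_n=n\,\ind_{B_n\times[0,1/n]}$. Then $X_n\to0$ almost surely by Borel--Cantelli and $E[|X_n|]=|B_n|\to0$, so the family is uniformly integrable, yet $E[X_n|\mathcal G]=\ind_{B_n}$ has $\limsup_n E[X_n|\mathcal G]=1$ everywhere. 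So either you need a genuine domination hypothesis (an integrable $Z$ with $|X_t|\le Z$ for all $t$, whereupon applying conditional monotone convergence to $\sup_{s\ge t}|X_s-X|$, with the supremum taken over rationals, finishes the job), or you must exploit structure beyond the stated hypotheses. As written, your proposal is incomplete at exactly the step where the paper's own proof offers no more than an assertion.
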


\begin{prf}
Let $Y=E[X|\Fg_\infty]$; then by uniform integrability,
\[E[X_t|\Fg_\infty]\to Y \hs \hbox{almost surely.}\]
Proposition 1 now gives the result.\qed
\end{prf}

\section{The proof of Theorem \ref{spinecor}}
We now return to the notation from Section \ref{intro_sec}.

\begin{proof}[Proof of Theorem \ref{spinecor}]
We recall Theorem 8.2 of Hardy and Harris \cite{hardy_harris:spine_approach_applications}, which says that under the conditions above,
\[\Qt[f(t) | \Fg_t] = \sum_{u\in N_t} f_u(t) \frac{e^{-\int_0^t M(X_u(s))R(X_u(s))ds} \zeta_u(t)}{Z(t)}.\]
Now if $f(t)$ converges $\Qt$-almost surely to $f$ then by Corollary \ref{ascorol} we have
\[\Qt[f(t) | \Fg_t] \to \Qt[f|\Fg_\infty] \hs \Qt\hbox{-almost surely}\]
and hence
\[\sum_{u\in N_t} f_u(t) \frac{e^{-\int_0^t M(X_u(s))R(X_u(s))ds} \zeta_u(t)}{Z(t)} \to \Qt[f|\Fg_\infty] \hs \Qt\hbox{-almost surely.}\]
Finally, for any ($\Fg_\infty$-measurable) event $A$ such that $\Qb(A)=1$,
\begin{align*}
\P(A|Z(\infty)>0) &= \frac{\P(A\cap\{Z(\infty)>0\})}{\P(Z(\infty)>0)}\\
&= \frac{\Qb\left[\frac{1}{Z(\infty)}\ind_{A\cap\{Z(\infty)>0\}}\right]}{\P(Z(\infty)>0)}\\
&= \frac{\Qb\left[\frac{1}{Z(\infty)}\ind_{\{Z(\infty)>0\}}\right]}{\P(Z(\infty)>0)}\\
&= 1.\qedhere
\end{align*}
\end{proof}

\bibliographystyle{plain}

\begin{thebibliography}{1}

\bibitem{hardy_harris:spine_approach_applications}
R.~Hardy and S.~C. Harris.
\newblock A spine approach to branching diffusions with applications to
  ${L}^p$-convergence of martingales.
\newblock In {\em S{\'{e}}minaire de Probabilit{\'{e}}s, XLII}, volume 1979 of
  {\em Lecture Notes in Math.} Springer, Berlin, 2009.

\bibitem{harris_hesse_kyprianou:bbm_strip}
S.C. Harris, M.~Hesse, and A.E. Kyprianou.
\newblock Branching brownian motion in a strip: survival near criticality.
\newblock 2012.
\newblock Preprint: \texttt{http://arxiv.org/abs/1212.1444v1}.

\bibitem{roberts:thesis}
M.I. Roberts.
\newblock {\em Spine changes of measure and branching diffusions}.
\newblock PhD thesis, University of Bath, 2010.
\newblock Available online: \texttt{http://people.bath.ac.uk/mir20/thesis.pdf}.

\end{thebibliography}

\def\cprime{$'$}

\end{document}